\theoremstyle{plain}
\newtheorem{thm}{Theorem}[section]
\newtheorem{corollary}[thm]{Corollary}
\newtheorem{lemma}[thm]{Lemma}
\newtheorem{proposition}[thm]{Proposition}
\theoremstyle{definition}
\newtheorem{definition}[thm]{Definition}
\theoremstyle{remark}
\newtheorem{example}[thm]{Example}
\numberwithin{equation}{section}
\def\1{{\rm (1)}}
\def\2{{\rm (2)}}
\def\3{{\rm (3)}}
\def\4{{\rm (4)}}
\def\5{{\rm (5)}}
\begin{document}

\title[Ratliff-Rush Closure of Ideals in Integral Domains]
{Ratliff-Rush Closure of Ideals in Integral Domains}


\author{A. Mimouni}

\address{Department of Mathematical Sciences, King Fahd University of Petroleum \& Minerals, P.O. Box 5046, Dhahran 31261, KSA}

\email{amimouni@kfupm.edu.sa}

\thanks{This work was funded by KFUPM under Project \# FT070001.}

\subjclass[2000]{Primary 13A15, 13A18, 13F05; Secondary 13G05,
13F30}

\keywords{Ratliff-Rush closure, integral closure, Ratliff-Rush
ideal, integrally closed ideal, reduction, Pr\"ufer domain,
valuation domain}

\begin{abstract}
This paper studies the Ratliff-Rush closure of ideals in integral
domains. By definition, the Ratliff-Rush closure of an ideal $I$ of
a domain $R$ is the ideal given by
$\tilde{I}:=\bigcup(I^{n+1}:_{R}I^{n})$ and an ideal $I$ is said to
be a Ratliff-Rush ideal if $\tilde{I}=I$. We completely characterize
integrally closed domains in which every ideal is a Ratliff-Rush
ideal and we give a complete description of the Ratliff-Rush closure
of an ideal in a valuation domain.
\end{abstract}

\maketitle

\section{Introduction}\label{Int}

Let $R$ be a commutative ring with identity and $I$ a regular ideal
of $R$, that is, $I$ contains a nonzero divisor. The ideals of the
form $(I^{n+1}:_{R}I^{n}):=\{x\in R| xI^{n}\subseteq I^{n+1}\}$
increase with $n$. In the case where $R$ is a Noetherian ring, the
union of this family is an interesting ideal, first studied by
Ratliff and Rush in \cite{RR}. In \cite{HLS}, W. Heinzer, D. Lantz
and K. Shah called the ideal $\tilde{I}:=\bigcup(I^{n+1}:_{R}I^{n})$
the Ratliff-Rush closure of $I$, or the Ratliff-Rush ideal
associated with $I$. An ideal $I$ is said to be a Ratilff-Rush
ideal, or Ratliff-Rush closed, if $I=\tilde{I}$. Among the
interesting facts of this ideal is that, for any regular ideal $I$
in a Noetherian ring $R$, there exists a positive integer $n$ such
that for all $k\geq n$, $I^{k}=(\tilde{I})^{k}$, that is, all
sufficiently high powers of a regular ideal are Ratliff-Rush ideals,
and a regular ideal is always a reduction of its Ratliff-Rush
closure in the sense of Northcoot-Rees (see \cite{NR}), that is,
$I(\tilde{I})^{n}=(\tilde{I})^{n+1}$ for some positive integer $n$.
Also the ideal $\tilde{I}$ is always between $I$ and the integral
closure $I'$ of $I$, that is, $I\subseteq \tilde{I}\subseteq I'$,
where $I':=\{x\in R| x$ satisfies an equation of the form $x^{k}+
a_{1}x^{k-1}+\dots + a_{k}=0$, where $a_{i}\in I^{i}$ for each $i\in
\{1, \dots, k\}\}$. Therefore, integrally closed ideals, i. e.,
ideals such that $I=I'$, are Ratliff-Rush ideals. Since then, many
investigations of the Ratliff-Rush closure of ideals in a Noetherian
ring have been carried out, for instance, see \cite{HJLS},
\cite{HLS}, \cite{Li}, \cite{RS} etc. The purpose of this paper is
to extend the notion of Ratliff-Rush closure of ideals to an
arbitrary integral domain and examine ring-theoretic properties of
this kind of closure. In the second section, we give an answer to a
question raised by B. Olberding \cite{O4} about the classification
of integral domains for which every ideal is a Ratliff-Rush ideal in
the context of integrally closed domains. This lead us to give a new
characterizations of Pr\"ufer and strongly discrete Pr\"ufer
domains.  Specifically, we prove that ``a domain $R$ is a Pr\"ufer
(respectively strongly discrete Pr\"ufer) domain if and only if $R$
is integrally closed and each nonzero finitely generated
(respectively each nonzero) ideal of $R$ is a Ratliff-Rush ideal"
(Theorem~\ref{RRID.6}). It turns that a Ratliff-Rush domain (i. e.,
domain such that each nonzero ideal is a Ratliff-Rush ideal) is a
quasi-Pr\"ufer domain, that is, its integral closure is a Pr\"ufer
domain. As an immediate consequence, we recover Heinzer-Lantz-Shah's
results for Noetherian domains (Corollary~\ref{RRID.8}). The third
section deals with valuation domains. Here, we give a complete
description of the Ratliff-Rush closure of a nonzero ideal in a
valuation domain (Proposition~\ref{RRIV.2}), and we state necessary
and sufficient condition under which the Ratliff-Rush closure
preserves inclusion (Proposition~\ref{RRIV.3}).  We also extend the
Ratliff-Rush closure to arbitrary nonzero fractional ideals of a
domain $R$, and we investigate its link to the notions of star
operations. We prove that ``for a valuation domain $V$, the
Ratliff-Rush closure is a star operation if and only if every
nonzero nonmaximal prime ideal of $V$ is not idempotent, and in this
case it coincides with the $v$-closure" (Theorem~\ref{RRIV.5}).\\

Throughout, $R$ denotes an integral domain, $qf(R)$ its quotient
field, and  $R'$ and $\overline{R}$ its integral closure and
complete integral closure respectively. For a nonzero (fractional)
ideal $I$ of $R$, the inverse of $I$ is given by
$I^{-1}=(R:I):=\{x\in qf(R)| xI\subseteq R\}$. The $v$-closure and
$t$-closure are defined respectively by $I_{v}=(I^{-1})^{-1}$ and
$I_{t}=\displaystyle\bigcup J_{v}$ where $J$ ranges over the set of
f. g. subideals of $I$. We say that $I$ is divisorial (or a
$v$-ideal) if $I=I_{v}$, and a $t$-ideal if $I=I_{t}$. Unreferenced
material is standard as in \cite{G1} or \cite{Ka}.
\section{Ratliff-Rush ideals in an integral domain}\label{RRID}

Let $R$ be an integral domain. A nonzero ideal $I$ of $R$ is
$L$-stable (here $L$ stands for Lipman) if
$R^{I}:=\bigcup(I^{n}:I^{n})=(I:I)$. The ideal $I$ is stable (or
Sally-Vasconcelos stable) if  $I$ is invertible in its endomorphisms
ring $(I:I)$ (\cite{SV}). A domain $R$ is $L$-stable (respectively
stable) if every nonzero ideal of $R$ is $L$-stable (respectively
stable). We recall that a stable domain is $L$-stable \cite[Lemma
2.1]{AHP}, and for recent developments on stability (in settings
different than originally considered), we refer the reader to
\cite{AHP, O1, O2, O3}. We start this section with the following
definition which
extend the notion of Ratliff-Rush closure to nonzero integral ideals in an arbitrary integral domain.\\

\begin{definition}\label{RRID.1} Let $R$ be an integral domain and $I$ a nonzero
integral ideal of $R$. The Ratliff-Rush closure of $I$ is the
(integral) ideal of $R$ given by\\
$\tilde{I}=\displaystyle\bigcup(I^{n+1}:_{R}I^{n})$. An integral
ideal $I$ of $R$ is said to be a Ratliff-Rush ideal, or Ratliff-Rush
closed, if $I=\tilde{I}$, and $R$ is said to be a Ratliff-Rush
domain if each nonzero integral ideal of $R$ is a Ratliff-Rush
ideal.
\end{definition}

The following useful lemma treats the Ratliff-Rush closure of some
particular classes of ideals.\\

\begin{lemma}\label{RRID.2} Let $R$ be an integral domain. Then:\\
1-All stable (and thus all invertible) ideals are Ratliff-Rush.\\
2-If $I$ is a nonzero idempotent ideal of $R$, then $\tilde{I}=R$.\\
\end{lemma}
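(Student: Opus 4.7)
The plan is to address the two items separately; both reduce to routine manipulations with the endomorphism ring $E := (I:I)$.

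For part (1), I would exploit the hypothesis that $I$ is invertible in $E$, which by definition yields a fractional ideal $J \subseteq qf(R)$, namely $J = (E :_{qf(R)} I)$, satisfying $IJ = E$. Since $E$ is a ring, iterating gives $I^n J^n = E$ for every $n \geq 1$. I would then take an arbitrary $x \in (I^{n+1}:_R I^n)$, so that $xI^n \subseteq I^{n+1}$, and multiply by $J^n$; using the identity $IE = I$ (which follows from $IE \subseteq I$ by definition of $E$, and $I \subseteq IE$ since $1 \in E$), one obtains
\[
xE \;=\; xI^n J^n \;\subseteq\; I^{n+1} J^n \;=\; I \cdot (I^n J^n) \;=\; IE \;=\; I.
\]
Since $1 \in E$, this forces $x \in I$. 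Taking the union over $n$ yields $\tilde{I} \subseteq I$, and the reverse inclusion is immediate from the definition of $\tilde{I}$. The invertible case then drops out, because an invertible ideal $I$ satisfies $(I:I) = R$ (if $xI \subseteq I$ then $x = xI I^{-1} \subseteq I I^{-1} = R$), so $E = R$ and the stability hypothesis is automatic.

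For part (2), an idempotent ideal satisfies $I^n = I$ for every $n \geq 1$, so the colon $(I^{n+1}:_R I^n)$ reduces to $(I:_R I)$. But $xI \subseteq I$ for every $x \in R$, so this colon equals $R$, and taking the union over $n$ gives $\tilde{I} = R$.

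The only point demanding care is the bookkeeping of inverses: one must remember that $I$ is invertible not in $R$ itself but in the possibly larger overring $E$, and that all products take place in $qf(R)$. Once this is kept straight, multiplication by $J^n$ legitimately strips $I^n$ off the colon inclusion, and the identity $IE = I$ closes the argument. No deeper machinery than the definition of stability is needed.
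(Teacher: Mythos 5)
Your proof is correct and follows essentially the same route as the paper: both use the stability identity $I\,(E:I)=E$ with $E=(I:I)$ to strip the powers of $I$ off the inclusion $xI^{n}\subseteq I^{n+1}$ (the paper peels one power at a time, you multiply by $J^{n}$ at once), and part (2) is the same direct computation. No gaps.
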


\begin{proof} 1) Let $I$ be a stable ideal of $R$ and set $T=(I:I)$.
Then $I(T:I)=T$. Now, let $x\in \tilde{I}$. Then $x\in R$ and
$xI^{s}\subseteq I^{s+1}$ for some positive integer $s$. Composing
the two sides with $(T:I)$ and using the fact that $I(T:I)=T$, we
obtain $xI^{s-1}\subseteq I^{s}$. Iterating this process, we get
$xT\subseteq I$. Hence $x\in I$ and therefore
$I=\tilde{I}$, as desired.\\
2) Let $I$ be a nonzero idempotent ideal of $R$. Then for each $n$,
$I^{n}=I$. So $(I^{n+1}:_{R}I^{n})=(I:_{R}I)=(I:I)\cap R=R$. Hence
$\tilde{I}=R$.\\
\end{proof}

\bigskip

The next proposition relates the Ratliff-Rush closure to the
$L$-stability.\\

\begin{proposition}\label{RRID.3}Let $R$ be an integral domain. If
$R$ is a Ratliff-Rush domain, then $R$ is $L$-stable.
\end{proposition}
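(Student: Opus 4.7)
The plan is to show the nontrivial inclusion $R^{I}\subseteq (I:I)$ for every nonzero ideal $I$, i.e.\ that every $x\in qf(R)$ with $xI^{n}\subseteq I^{n}$ for some $n$ already satisfies $xI\subseteq I$. I will reduce the problem from the $n$-th power of $I$ down to $I$ itself by constructing an auxiliary integral ideal and applying the Ratliff-Rush hypothesis to it.

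The key step is the following setup. Given $x\in(I^{n}:I^{n})$, I would write $x=a/b$ with $a,b\in R$ and $b\ne 0$, so that the hypothesis becomes $aI^{n}\subseteq bI^{n}$. Multiplying by $I$ gives $aI^{n+1}\subseteq bI^{n+1}$. Now I would introduce the nonzero integral ideal $J=(a,b)I\subseteq R$ and compute
\[
J\cdot(bI)^{n}=b^{n}(a,b)I^{n+1}=b^{n}\bigl(aI^{n+1}+bI^{n+1}\bigr)\subseteq b^{n}\cdot bI^{n+1}=(bI)^{n+1}.
\]
Thus every element $j$ of $J$ belongs to $\bigl((bI)^{n+1}:_{R}(bI)^{n}\bigr)$, since $J\subseteq R$.

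Now I invoke the hypothesis: because $R$ is a Ratliff-Rush domain and $bI$ is a nonzero integral ideal, $bI$ is Ratliff-Rush, so $\bigl((bI)^{n+1}:_{R}(bI)^{n}\bigr)\subseteq\widetilde{bI}=bI$. Therefore $J\subseteq bI$, which reads $(a,b)I\subseteq bI$, hence $aI\subseteq bI$, i.e.\ $xI\subseteq I$, and $x\in(I:I)$ as required. The reverse inclusion $(I:I)\subseteq R^{I}$ is immediate from the definitions.

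The only obstacle in this strategy is locating the right auxiliary ideal: the choice $J=(a,b)I$ together with the elementary verification $J(bI)^{n}\subseteq(bI)^{n+1}$ is precisely what bridges the gap between the endomorphism rings of $I^{n}$ and of $bI$; once this containment is in place, the argument is a direct bookkeeping application of Definition~\ref{RRID.1} and the Ratliff-Rush hypothesis on $bI$.
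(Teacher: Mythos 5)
Your proof is correct and follows essentially the same route as the paper: clear the denominator of $x=a/b$, observe that $aI$ (the paper's $dxI$) multiplies $(bI)^{n}$ into $(bI)^{n+1}$, and apply the Ratliff-Rush hypothesis to the integral ideal $bI$ to conclude $aI\subseteq bI$, i.e.\ $x\in(I:I)$. The auxiliary ideal $J=(a,b)I$ is only a cosmetic repackaging, since its extra summand $bI$ lies trivially in $bI$.
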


\begin{proof} Assume that $R$ is a Ratliff-Rush domain. Let $I$ be a
nonzero (integral) ideal of $R$ and let $x\in R^{I}$. Then there
exists a positive integer $n$ such that $xI^{n}\subseteq I^{n}$. Let
$0\not =d\in R$ such that $dx\in R$.  Then $xI^{n+1}\subseteq
I^{n+1}$ implies that $dxI(dI)^{n}=d^{n+1}xI^{n+1}\subseteq
d^{n+1}I^{n+1}=(dI)^{n+1}$. Hence $dxI\subseteq
((dI)^{n+1}:(dI)^{n})$. Since $dxI\subseteq R$, then $dxI\subseteq
\widetilde{(dI)}=dI$ (since $R$ is Ratliff-Rush) and so $xI\subseteq
I$. Hence $x\in (I:I)$ and therefore $R^{I}=(I:I)$. So $I$ is
$L$-stable and therefore $R$ is $L$-stable, as desired.
\end{proof}

\bigskip

It's easy to see that for a finitely generated ideal $I$ of a domain
$R$, in particular if $R$ is Noetherian, $\tilde{I}\subseteq I'$.
However, this is not the case for an arbitrary ideal of an integral
domain. Indeed, let $V$ be a valuation domain with maximal ideal $M$
such that $M^{2}=M$, $0\not =a\in M$ and set $I=aM$. It is easy to
see that $\tilde{I}=a(M:M)\cap V=aV$ and $I=I'$ (since all ideals of
a Pr\"ufer domains are integrally closed). The next theorem establishes a
connection between stable domains, Ratliff-Rush domains and domains for which $\tilde{I}\subseteq I'$ for all ideals $I$.
For this, we need the following crucial lemma.\\

\begin{lemma}\label{RRID.4} Let $R$ be an integral domain.
If $\tilde{I}=I$ for every finitely generated ideal $I$ of $R$, then
$R'$ is a Pr\"ufer domain.
\end{lemma}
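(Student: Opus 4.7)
Plan: The target is to show that $R'$ is Prüfer by verifying that every nonzero finitely generated ideal of $R'$ is invertible in $R'$. Since every element of $R'$ has the form $p/q$ with $p,q\in R$, invertibility of $(u,v)R'$ for $u,v\in R'$ reduces to the case of ideals $(a,b)R'$ with $a,b\in R$ nonzero (after clearing denominators). As a preliminary observation, the hypothesis is in fact sharper than stated: the chain $\{(I^{n+1}:_R I^n)\}_n$ is ascending with union $\tilde I = I$, so every term already equals $I$; hence
\[
(I^{n+1}:_R I^n) \;=\; I \qquad \text{for every } n\geq 1 \text{ and every nonzero finitely generated ideal } I.
\]

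Now fix nonzero $a,b\in R$ and set $I=(a,b)$. I would consider the ascending family of \emph{fractional} colon ideals $C_n := (I^{n+1} :_K I^n)$ in $K=\mathrm{qf}(R)$. Two key facts are to be established. First, $C_n\subseteq R'$: indeed, any $x\in C_n$ satisfies $x\cdot I^n\subseteq I^{n+1}=I\cdot I^n$ with $I^n$ a finitely generated faithful $R$-module, so by the determinant trick $x$ satisfies a monic polynomial over $R$ with coefficients in powers of $I$. Second, for $x=p/q$ with $p,q\in R$, the inclusion $x I^n\subseteq I^{n+1}$ forces $pI^n\subseteq qI^{n+1}\subseteq I^{n+1}$, and the sharpened hypothesis gives $p\in(I^{n+1}:_R I^n)=I$. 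Thus the elements of $C:=\bigcup_n C_n\subseteq R'$ have a very constrained ``shape''.

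The plan is then to exhibit $C$ as the fractional inverse of $IR'$ inside $R'$; that is, $I\cdot C=R'$. One direction, $IC\subseteq R'$, is almost encoded in the two observations above. The reverse inclusion $1\in IC$ is the main obstacle and requires a genuinely new input: it is where the RR hypothesis must be used not just for $I$ itself but for many related finitely generated ideals simultaneously. My strategy is to apply the sharpened hypothesis to the ideals $dI$ (for suitable $d\in R\setminus\{0\}$) and to each power $I^k$, which gives the identities $((dI)^{n+1}:_R(dI)^n)=dI$ and $((I^k)^{n+1}:_R(I^k)^n)=I^k$; combining these with the integrality relations produced by the determinant trick on elements of $C$, a Nakayama-style descent should yield a presentation $1=\sum x_i y_i$ with $x_i\in C$ and $y_i\in IR'$.

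The hardest step will be this Nakayama descent in the non-Noetherian setting: in the classical Ratliff-Rush theory one uses that $\tilde I$ is a reduction of $I$ for regular ideals in a Noetherian ring, and powers of the ideal eventually stabilize. Here neither is available a priori, so the substitute must be the fact that the colons $(I^{n+1}:_R I^n)$ have \emph{collapsed all the way down to $I$}, not merely eventually. This collapse is exactly what rules out pathologies like idempotent primes (cf. Lemma~\ref{RRID.2}(2)) and should be strong enough to force the fractional colons $C_n$ to conspire to produce $1\in IC$, yielding invertibility of $IR'$ and hence Prüferness of $R'$.
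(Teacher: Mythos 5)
Your proposal is incomplete at exactly the point where the content of the lemma lies. The preliminary reductions are fine: the sharpened hypothesis $(I^{n+1}:_R I^n)=I$ for all $n$ is correct (each colon is squeezed between $I$ and $\tilde I=I$), the reduction of Pr\"uferness of $R'$ to invertibility of two-generated ideals $(a,b)R'$ with $a,b\in R$ is standard, and the inclusion $C_n=(I^{n+1}:_K I^n)\subseteq R'$ via the determinant trick is correct. But the whole argument then rests on proving $1\in IC$, i.e.\ that $IR'$ is invertible in $R'$, and for this you offer no proof: you explicitly label it the ``main obstacle'' and propose that a ``Nakayama-style descent'' using the identities $((dI)^{n+1}:_R(dI)^n)=dI$ and $((I^k)^{n+1}:_R(I^k)^n)=I^k$ ``should'' produce a relation $1=\sum x_iy_i$. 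No such descent is exhibited, and in the non-Noetherian setting there is no stabilization of powers or of the filtration to drive one; the observation that elements $p/q\in C_n$ have $p\in I$ constrains $C$ but gives no lower bound on $IC$ whatsoever (it is consistent with everything you proved that $IC$ be a proper ideal of $R'$, as far as your argument goes). So, as written, this is a plan with a genuine gap, not a proof.

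For comparison, the paper avoids invertibility entirely and localizes the problem: fix a maximal ideal $N$ of $R'$ and $x=a/b\in qf(R)$, apply the hypothesis to the single ideal $J=(a^{4},a^{3}b,ab^{3},b^{4})$, note $a^{2}b^{2}J\subseteq J^{2}$, hence $a^{2}b^{2}\in (J^{2}:_R J)\subseteq\tilde J=J$, write $a^{2}b^{2}=g_{1}a^{4}+g_{2}a^{3}b+g_{3}ab^{3}+g_{4}b^{4}$ and divide by $b^{4}$ to get $g_{1}x^{4}+g_{2}x^{3}-x^{2}+g_{3}x+g_{4}=0$, a polynomial relation with a unit coefficient; Kaplansky's $u$, $u^{-1}$ theorem applied to the integrally closed quasilocal domain $R'_N$ then gives $x\in R'_N$ or $x^{-1}\in R'_N$, so each $R'_N$ is a valuation domain and $R'$ is Pr\"ufer. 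If you want to salvage your approach you would need an actual mechanism forcing $1\in IC$; otherwise the route through a cleverly chosen ideal such as $(a^{4},a^{3}b,ab^{3},b^{4})$ and the $u$, $u^{-1}$ theorem is the way the hypothesis gets used.
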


\begin{proof} Let $N$ be a maximal ideal of $R'$. To show that $R'_{N}$
is a valuation domain, let $x={a\over b}\in qf(R)$, where $a, b\in
R\setminus \{0\}$. Let $J$ be the ideal $(a^{4},a^{3}b, ab^{3},
b^{4})$ of $R$. Then $a^{2}b^{2}J=(a^{6}b^{2}, a^{5}b^{3},
a^{3}b^{5}, a^{2}b^{6})\subseteq J^{2}=(a^{8}, a^{7}b, a^{5}b^{3},
a^{4}b^{4}, a^{6}b^{2}, a^{3}b^{5},a^{2}b^{6}, ab^{7}, b^{8})$. So
$a^{2}b^{2}\in (J^{2}:_{R}J)\subseteq \tilde{J}=J$. Thus $
a^{2}b^{2}=g_{1}a^{4}+g_{2}a^{3}b+g_{3}ab^{3}+g_{4}b^{4}$ for some
$g_{1}, g_{2}, g_{3}$ and $g_{4}$ in $R$. Dividing by $b^{4}$, we
get $0=g_{1}x^{4}+g_{2}x^{3}-x^{2}+g_{3}x+g_{4}$. By the $u$,
$u^{-1}$ theorem (\cite[Theorem 67]{Ka}), we get that either $x\in
R'_{N}$ or $x^{-1}\in R'_{N}$, as desired.
\end{proof}

\bigskip

\begin{thm}\label{RRID.5} Let $R$ be an integral domain. Consider the following.\\
$(1)$ $R$ is stable.\\
$(2)$ $R$ is Ratliff-Rush.\\
$(3)$ $\tilde{I}\subseteq I'$ for each nonzero ideal $I$ of $R$.\\
$(4)$ $R$ has no nonzero idempotent prime ideals.\\
Then $(1)\Longrightarrow (2)\Longrightarrow (3)\Longrightarrow (4)$.
Moreover, If $R$ is a semilocal Pr\"ufer domain, then
$(4)\Longrightarrow (1)$.
\end{thm}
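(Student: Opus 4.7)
The four implications split into a routine block and a substantive ``moreover'' step.

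For $(1)\Rightarrow(2)$, I would simply apply Lemma~\ref{RRID.2}(1) to every nonzero ideal of $R$: if every nonzero ideal is stable then every nonzero ideal is Ratliff-Rush. For $(2)\Rightarrow(3)$, the inclusion $I\subseteq I'$ is immediate from the definition of the integral closure of an ideal, so $\tilde I=I\subseteq I'$ with nothing else to verify. For $(3)\Rightarrow(4)$, I argue contrapositively: if $P$ were a nonzero idempotent prime, Lemma~\ref{RRID.2}(2) would give $\tilde P=R$, hence (3) would yield $R\subseteq P'$. But any $x\in P'$ satisfies a relation $x^n+a_1x^{n-1}+\cdots+a_n=0$ with $a_i\in P^i\subseteq P$, so $x^n\in P$ and thus $x\in\sqrt P=P$. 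Therefore $R\subseteq P$, contradicting the properness of $P$.

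For the \emph{moreover} direction $(4)\Rightarrow(1)$ under the semilocal Pr\"ufer hypothesis, my plan is to reduce to the valuation-local picture. Write $R=\bigcap_{i=1}^n V_i$ with $V_i=R_{M_i}$, and note that each $V_i$ inherits (4) since the nonzero primes of $V_i$ correspond to the nonzero primes of $R$ contained in $M_i$. The heart of the argument is then the claim that every strongly discrete valuation domain is stable. For a nonzero ideal $I\subseteq V_i$, the natural candidate endomorphism ring is $(V_i)_P$, where $P=\{a\in V_i:aI\subsetneq I\}$ is prime in a valuation domain; I would show $(I:I)=(V_i)_P$, and then use the hypothesis that the nonzero primes of $(V_i)_P$ are non-idempotent to force the value set $v(I)$ to attain its minimum in the value group of $(V_i)_P$. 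This makes $I$ principal (hence invertible, hence stable) over $(V_i)_P$. The globalization from stability of each $V_i$ back to stability of $R$ is then routine in the semilocal Pr\"ufer setting, using comaximality of the $M_i$'s and in line with Olberding's results in \cite{O1,O2,O3}.

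The main obstacle is precisely this valuation-local step: translating ``no idempotent nonzero prime'' into ``the value set $v(I)$ attains its infimum in the value group of the localization at the associated prime.'' Once this local claim is in hand, the implications $(1)\Rightarrow(2)\Rightarrow(3)\Rightarrow(4)$ are formal consequences of Lemma~\ref{RRID.2}, and the passage from local stability to stability of a semilocal Pr\"ufer domain is standard bookkeeping via comaximality.
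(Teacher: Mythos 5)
Your proposal is correct, and it is worth comparing with the paper at the two points where it diverges. The implications $(1)\Rightarrow(2)\Rightarrow(3)$ are exactly as in the paper (Lemma~\ref{RRID.2}(1) plus the triviality of $\tilde I=I\subseteq I'$). For $(3)\Rightarrow(4)$ you take a genuinely different and in fact shorter route: you apply $(3)$ directly to the idempotent prime $P$ itself, using $\tilde P=R$ from Lemma~\ref{RRID.2}(2) together with the observation that $P'\subseteq\sqrt{P}=P$ (any $x\in P'$ has $x^k\in P$), so $(3)$ would force $R\subseteq P$. That is valid. The paper instead works with the ideal $I=aP$ for $0\neq a\in P$, computing $(I^{n+1}:_R I^n)=a(P:P)$ to get $a\in\tilde I$ and then showing $a\notin I'$ by a direct integral-equation argument; the two arguments prove the same thing, yours with less computation, the paper's exhibiting an explicit element of $\tilde I\setminus I'$ for a non-prime ideal. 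The larger difference is in the ``moreover'' direction: the paper does not prove $(4)\Rightarrow(1)$ at all but simply cites \cite[Theorem 2.10]{AHP}, whereas you sketch a proof from scratch (localize at the $M_i$, show a strongly discrete valuation domain is stable, then globalize). Your sketch follows the standard route and is essentially right, but note that the ``main obstacle'' you flag is left as a plan; it can be closed quickly: with $(I:I)=V_P$ and $PV_P=tV_P$ principal, if $I$ were not principal over $V_P$ then every $a\in I$ admits $b\in I$ with $v(b)<v(a)$, hence $v(at^{-1})\geq v(b)$ and $t^{-1}I\subseteq I$, contradicting $(I:I)=V_P$. Also, the globalization step (an ideal of a semilocal Pr\"ufer domain that is locally principal over the finitely many maximal ideals of $E=(I:I)$ is principal over $E$) requires a comaximality/approximation argument to pick one generator realizing the correct value at each maximal ideal simultaneously; it is standard but more than pure bookkeeping. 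Since the paper delegates this implication to the literature anyway, your incomplete sketch is not an error, just an undertaking the paper avoids.
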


\begin{proof} $(1)\Longrightarrow (2)$ by Lemma~\ref{RRID.2}.\\
$(2)\Longrightarrow (3)$ is clear.\\
For $(3)\Longrightarrow (4)$, assume that $P$ is a nonzero
idempotent prime ideal of $R$. Then if $I=aP$ with $0\not =a\in P$,
then for all $n\geq 1$, $(I^{n+1}:_{R}I^{n})=(I^{n+1}:I^{n})\cap
R=(a^{n+1}P:a^{n}P)\cap R=a(P:P)\cap R=a(P:P)$ (since
$a(P:P)\subseteq P(P:P)=P\subseteq R$). So $a\in a(P:P)=\tilde{I}$.
Suppose $a\in I'=(aP)'$. Then $a^{k}+ c_{1}a^{k-1}+\dots + c_{k}=0$,
where $c_{i}=a^{i}b_{i}\in I^{i}=a^{i}P$ for each $i\in \{1, \dots,
k\}\}$. So $a^{k}+ b_{1}a^{k}+ b_{2}a^{k}+\dots + b_{k}a^{k}=0$ with
$b_{i}\in P$. Thus $a^{k+1}(1+b)=0$ with $b\in P$, a
contradiction.\\
$(4)\Longleftrightarrow (1)$ if $R$ is a semilocal Pr\"ufer domain
by \cite[Theorem 2.10]{AHP}.
\end{proof}

\bigskip

We are now ready to announce the main theorem of this section. It
gives a classification of the integral domains for which every ideal
is a Ratliff-Rush ideal in the context of integrally closed domains
 and states a new characterization of Pr\"ufer and strongly discrete
 Pr\"ufer domains. Recall that a Pr\"ufer domain is said to be strongly
 discrete if $P\not= P^{2}$ for each nonzero prime ideal $P$ of $R$.\\

\begin{thm}\label{RRID.6} Let $R$ be an integrally closed domain. The following
statements are equivalent.\\
$(1)$ $\tilde{I}=I$ for every finitely generated (respectively
every) nonzero ideal $I$ of $R$.\\
$(2)$ $R$ is Pr\"ufer (respectively strongly discrete Pr\"ufer).
\end{thm}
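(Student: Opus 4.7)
\emph{Proof proposal.} The plan is to prove the two equivalences simultaneously, splitting by direction and by the two cases (Pr\"ufer versus strongly discrete Pr\"ufer).

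For the Pr\"ufer case of $(2)\Rightarrow(1)$, one observes that every nonzero finitely generated ideal of a Pr\"ufer domain is invertible, hence stable, hence Ratliff-Rush by Lemma~\ref{RRID.2}(1). The converse $(1)\Rightarrow(2)$ in the Pr\"ufer case follows immediately from Lemma~\ref{RRID.4}: the hypothesis forces $R'$ to be Pr\"ufer, and $R'=R$ since $R$ is integrally closed.

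In the strongly discrete setting, $(1)\Rightarrow(2)$ is also quick. The Pr\"ufer case just handled shows that $R$ is Pr\"ufer; if some nonzero prime $P$ satisfied $P=P^{2}$, then Lemma~\ref{RRID.2}(2) would give $\tilde{P}=R\neq P$, contradicting the Ratliff-Rush hypothesis. Hence $P\neq P^{2}$ for every nonzero prime, i.e.\ $R$ is strongly discrete Pr\"ufer.

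The substantive step is $(2)\Rightarrow(1)$ in the strongly discrete case, and the approach is to localize. Since the strongly discrete Pr\"ufer property passes to each localization at a maximal ideal, each $R_{M}$ is a strongly discrete valuation domain; being local Pr\"ufer with no nonzero idempotent prime, Theorem~\ref{RRID.5} shows $R_{M}$ is stable, so by Lemma~\ref{RRID.2}(1) every nonzero ideal of $R_{M}$ is Ratliff-Rush. Given a nonzero ideal $I$ of $R$ and $x\in\tilde{I}$, so that $x\in R$ and $xI^{n}\subseteq I^{n+1}$ for some $n$, extension to $R_{M}$ yields $x(IR_{M})^{n}\subseteq(IR_{M})^{n+1}$, whence $x\in\widetilde{IR_{M}}=IR_{M}$ for every maximal $M$. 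The conductor $(I:_{R}x)$ then meets $R\setminus M$ for each $M$, so it is not contained in any maximal ideal and equals $R$, forcing $x\in I$ and thus $\tilde{I}=I$.

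The main obstacle is the localization-and-glue step of the last paragraph. One must verify the easy inclusion $\tilde{I}R_{M}\subseteq\widetilde{IR_{M}}$ and the global reconstruction $R\cap\bigcap_{M}IR_{M}=I$ via the conductor argument, and one must know that the strongly discrete Pr\"ufer property itself localizes to each $R_{M}$, so that the semilocal statement of Theorem~\ref{RRID.5} is available in each local ring. The other implications are essentially immediate consequences of the two preceding lemmas.
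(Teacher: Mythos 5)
Your proposal is correct and follows essentially the same route as the paper's proof: Lemma~\ref{RRID.4} together with integral closedness for $(1)\Rightarrow(2)$, invertibility plus Lemma~\ref{RRID.2} for the Pr\"ufer half of $(2)\Rightarrow(1)$, idempotent primes as the obstruction to strong discreteness, and localization at maximal ideals combined with Theorem~\ref{RRID.5} (stability of strongly discrete valuation domains) and $I=\bigcap_{M}IR_{M}$ for the strongly discrete half. The only cosmetic differences are that you cite Lemma~\ref{RRID.2}(2) directly rather than Theorem~\ref{RRID.5} for the idempotent-prime step, and you spell out the conductor argument that the paper leaves implicit.
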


\begin{proof} $(1)\Longrightarrow (2)$ By
Lemma~\ref{RRID.4}, $R$ is a Pr\"ufer
domain. Moreover, if each ideal is a Ratliff-Rush ideal, by Theorem~\ref{RRID.5}, $R$ is strongly discrete.\\

$(2)\Longrightarrow (1)$. Let $R$ be a Pr\"ufer domain. Then every
finitely generated ideal is invertible and therefore a Ratliff-Rush
ideal by Lemma~\ref{RRID.2}. Assume that $R$ is a strongly discrete
Pr\"ufer domain. Let $I$ be a nonzero ideal of $R$ and let $x\in
\tilde{I}$. Then $x\in R$ and $xI^{s}\subseteq I^{s+1}$ for some
positive integer $s$. Let $M$ be a maximal ideal of $R$. If
$I\not\subseteq M$, then $x\in R\subseteq R_{M}=IR_{M}$. Assume that
$I\subseteq M$. Since $x\in R_{M}$ and $xI^{s}R_{M}\subseteq
I^{s+1}R_{M}$, then $x\in \widetilde{IR_{M}}$. Since $R$ is strongly
discrete, then $R_{M}$ is a strongly discrete valuation domain. By
Theorem~\ref{RRID.5}, $\widetilde{IR_{M}}=IR_{M}$. Hence $x\in
IR_{M}$. So $x\in \bigcap \{IR_{M}/M\in Max(R)\}=I$. Hence
$I=\tilde{I}$, as desired.\\
\end{proof}

The following example shows that the above Theorem is not true if
$R$ is not integrally closed.\\

\begin{example}\label{RRID.7} Let $\mathbb{Q}$ be the field of
rational numbers, $X$ an indeterminate over $\mathbb{Q}$ and
$V=\mathbb{Q}(\sqrt{2})[[X]]=\mathbb{Q}(\sqrt{2}) + M$. Set
$R=\mathbb{Q}+M$. Then $R$ is stable. Indeed, Let $I$ be a nonzero
(integral) ideal of $R$. Since $R$ is local with maximal ideal $M$,
then $I\subseteq M$. If $I$ is an ideal of $V$, then $I=cV$ for some
$c\in I$. If $I$ is not an ideal of $V$, then $I=m(W+M)$, where
$\mathbb{Q}\subseteq W\subsetneqq \mathbb{Q}(\sqrt{2})$ is a
$\mathbb{Q}$-vector space. Since
$[\mathbb{Q}(\sqrt{2}):\mathbb{Q}]=2$, then $\mathbb{Q}=W$ and so
$I=cR$. Therefore $R$ is stable and then Ratliff-Rush by
Theorem~\ref{RRID.5}. However, $R$ is not a Pr\"ufer domain
(\cite[Theorem 2.1]{BG}).
\end{example}

\bigskip

Our next corollary recovers Heinzer-Lantz-Shah's results for
Noetherian domains.\\

\begin{corollary}\label{RRID.8}(cf. \cite[Proposition 3.1 and
Theorem 3.9]{HLS} Let $R$ be a Noetherian domain. Then $R$ is a
Ratliff-Rush domain if and only if $R$ is stable.
\end{corollary}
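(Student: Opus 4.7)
The plan: the direction ``$R$ stable $\Longrightarrow$ $R$ Ratliff-Rush'' is immediate and requires no Noetherianity, being precisely the implication $(1)\Longrightarrow(2)$ of Theorem~\ref{RRID.5}. The substantive content lies in the converse.

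For the converse, I would start with a Noetherian Ratliff-Rush domain $R$. Since every ideal of $R$ is finitely generated, the hypothesis of Lemma~\ref{RRID.4} is satisfied and the lemma yields that $R'$ is a Pr\"ufer domain. The next step is to reduce to dimension one: a classical theorem (in the spirit of Davis and Krull-Akizuki) asserts that a Noetherian domain whose integral closure is Pr\"ufer must have Krull dimension at most one. Concretely, a hypothetical height-two chain $0\subsetneq P\subsetneq Q$ in $R$ would lift via Lying-Over / Going-Up to a chain in $R'$, which is incompatible with $R'$ being Pr\"ufer together with the finiteness imposed by $R$ Noetherian. Once $\dim R\leq 1$ is in hand, Krull-Akizuki upgrades $R'$ to a Dedekind domain.

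At this point Proposition~\ref{RRID.3} gives that $R$ is $L$-stable, and I would conclude that $R$ is stable by invoking the known equivalence, for one-dimensional Noetherian domains with Dedekind integral closure, between $L$-stability and stability (see Anderson-Huckaba-Papick~\cite{AHP}); since stability is a local property in this setting, the passage through each localization $R_M$ presents no additional obstacle. The principal difficulty I anticipate is the dimension-one reduction: combining Noetherianity with the Pr\"uferness of $R'$ to force $\dim R\leq 1$ is a classical fact not developed among the earlier lemmas, so one either cites it or furnishes a self-contained argument using Going-Up and the structure of valuation localizations of $R'$. Once the dimension is under control and $R'$ is Dedekind, the remaining step --- promoting $L$-stability to stability --- is essentially bookkeeping.
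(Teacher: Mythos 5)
Your proposal follows essentially the same route as the paper: Lemma~\ref{RRID.4} gives that $R'$ is Pr\"ufer, one reduces to dimension one so that $R'$ is Dedekind, and then Proposition~\ref{RRID.3} plus the Anderson--Huckaba--Papick result upgrades $L$-stability to stability; the reverse implication is indeed just Theorem~\ref{RRID.5}(1)$\Rightarrow$(2). The one place where you are vaguer than the paper is the dimension-one reduction. The paper gets it at once from the Mori--Nagata theorem: $R'=\overline{R}$ is a Krull domain, a Krull domain that is Pr\"ufer is Dedekind, and hence $\dim R=\dim R'=1$ by integrality. The ``classical theorem'' you cite (a Noetherian domain with Pr\"ufer integral closure has dimension at most one) is true, and citing it is legitimate, but your sketched self-contained argument does not work as stated: lifting a height-two chain to $R'$ by Lying-Over/Going-Up produces nothing ``incompatible with $R'$ being Pr\"ufer,'' since Pr\"ufer domains can have arbitrary Krull dimension, and $R'$ need not be module-finite (or even Noetherian) over $R$, so no finiteness is directly ``imposed by $R$ Noetherian.'' The finiteness that actually closes this step is precisely the Krull property of $R'$ supplied by Mori--Nagata, which is how the paper (and the standard proof of the classical fact you invoke) argues; with that substitution your argument coincides with the paper's.
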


\begin{proof} Since $R$ is Noetherian, then $R'=\bar{R}$ is a Krull
domain. By Lemma~\ref{RRID.4}, $R'$ is a Pr\"ufer domain. Hence $R'$
is a Dedekind domain and therefore $dimR=dimR'=1$. By
Proposition~\ref{RRID.3}, $R$ is $L$-stable and therefore stable by
\cite[Proposition 2.4]{AHP}.
\end{proof}

We recall that a domain $R$ is said to be strong Mori if $R$
satisfies the ascending chain conditions on $w$-ideals \cite {FM}.
Trivially, a Noetherian domain is strong Mori and a strong Mori
domain is Mori. The next corollary shows that the
Ratliff-Rush property forces a strong Mori domain to be Noetherian.\\

\begin{corollary}\label{RRIP.5} Let $R$ be a strong Mori domain. If
$R$ is a Ratliff-Rush domain, then $R$ is Noetherian.
\end{corollary}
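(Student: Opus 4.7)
The plan is to mimic the proof of Corollary~\ref{RRID.8}, substituting the strong Mori hypothesis for Noetherianness at the appropriate places. First, Proposition~\ref{RRID.3} gives that $R$ is $L$-stable, and applying Lemma~\ref{RRID.4} to the finitely generated ideals of $R$ (each Ratliff-Rush by hypothesis) shows that the integral closure $R'$ is a Pr\"ufer domain.

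The key intermediate step is to conclude $\dim R = 1$. For this I would invoke the strong Mori analog of the Mori-Nagata theorem, namely that the integral closure (or complete integral closure) of a strong Mori domain is a Krull domain. Being both Krull and Pr\"ufer, $R'$ is therefore a Dedekind domain; in particular $\dim R' = 1$, and integrality of $R'$ over $R$ gives $\dim R = 1$.

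Now $R$ is $L$-stable of Krull dimension one, so \cite[Proposition~2.4]{AHP}---used exactly as in the proof of Corollary~\ref{RRID.8}---upgrades $L$-stability of $R$ to stability. To close the argument one must go one step further than in Corollary~\ref{RRID.8} and deduce Noetherianness: the strong Mori hypothesis represents each nonzero ideal $I$ as $I=J_w$ for some finitely generated $J\subseteq I$, and I would argue that stability together with the one-dimensional local structure forces the $w$-closure to coincide with $J$, so that $I=J$ is finitely generated.

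The main obstacle is the pair of structural facts invoked above---that the integral closure of a strong Mori domain is Krull, and that a stable strong Mori domain of dimension one is Noetherian---both of which are the natural generalizations of the classical Noetherian statements used in Corollary~\ref{RRID.8} but need to be located in (or reduced to) the literature on $w$-operations and stable domains.
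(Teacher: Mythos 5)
Your proposal hinges on an unestablished structural claim: that the integral closure of a strong Mori domain is a Krull domain (an ``SM Mori--Nagata theorem''). You flag this yourself as the main obstacle, and it is precisely where the work lies; nothing in the paper's toolkit supplies it. The paper avoids any such global statement by localizing: since $R'$ is Pr\"ufer (Lemma~\ref{RRID.4}), every maximal ideal $M$ of $R$ is divisorial by the $t$-linkedness results \cite[Corollary 2.5]{DHLRZ} and \cite[Theorem 2.6]{DHLZ}; then $M=M_v$ lets one invoke \cite[Theorem 3.9]{FM} to get that $R_M$ is Noetherian, so the \emph{classical} Mori--Nagata theorem makes $(R_M)'=(R')_M$ a Krull domain, which is also Pr\"ufer and hence Dedekind, giving $\htt M=\dim R_M=1$ for every maximal ideal $M$ and thus $\dim R=1$. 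In other words, Krull-ness is only ever needed for the Noetherian local rings $R_M$, where it is classical; your route needs it globally for $R$ and does not prove it. (Also, Proposition~\ref{RRID.3} and $L$-stability play no role in the paper's argument.)

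The endgame of your proposal is likewise not sound, and in fact unnecessary. You apply \cite[Proposition 2.4]{AHP} to upgrade $L$-stability to stability, but in Corollary~\ref{RRID.8} that result is used for a domain already known to be Noetherian of dimension one; here $R$ is not yet known to be Noetherian, so the citation is not available as stated, and the subsequent assertion that ``stability together with the one-dimensional local structure forces $J_w=J$'' is not an argument. None of this machinery is needed: once $\dim R=1$, every maximal ideal has height one and hence is a $t$-ideal, so the $w$-operation is trivial and ACC on $w$-ideals is ACC on all ideals; this is exactly \cite[Corollary 3.10]{FM}, which is how the paper concludes that a one-dimensional strong Mori domain is Noetherian. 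So the repair is either to prove or properly locate the SM analogue of Mori--Nagata (nontrivial), or to adopt the paper's localization argument, and in either case to finish with \cite[Corollary 3.10]{FM} rather than via stability.
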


\begin{proof} By Lemma~\ref{RRID.4}, $R'$ is a Pr\"ufer domain. Hence every maximal ideal of $R$ is
divisorial (\cite[Corollary 2.5]{DHLRZ} and \cite[Theorem
2.6]{DHLZ}). Now, let $M$ be a maximal ideal of $R$. Since
$M=M_{v}$, then $R_{M}$ is Noetherian (\cite[Theorem 3.9]{FM}).
Hence $R'_{M}=(R_{M})'=\overline{R_{M}}$ is a Krull domain. But
since $R'$ is Pr\"ufer, then so is $R_{M}'$. Hence $R_{M}'$ is
Dedekind and so $htM=dimR_{M}=dimR_{M}'=1$. Then $dimR=1$ and
therefore $R$ is Noetherian (\cite[Corollary 3.10]{FM}).
\end{proof}

Recall that $R$ is seminormal if for each $x\in qf(R)$, $x^{2},
x^{3}\in R$ implies that $x\in R$. Our next corollary states
some conditions under which a Ratliff-Rush Mori domain has dimension one.\\

\begin{corollary}\label{RRID.9} Let $R$ be a Mori domain such that either
$(R:\overline{R})\not =0$ or $R$ is seminormal. If $R$ is a
Ratliff-Rush domain, then $dimR=1$.
\end{corollary}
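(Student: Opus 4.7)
I would follow the template of Corollary~\ref{RRID.8} for the Noetherian case: use Lemma~\ref{RRID.4} to get $R'$ Prüfer, then use the Mori hypothesis together with the extra condition (in place of Mori--Nagata) to force $\overline{R}$ to be Krull; combine Prüfer with Krull to get Dedekind of dimension one, and transfer this back to $R$ by integrality.

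By Lemma~\ref{RRID.4}, $R'$ is a Prüfer domain. For the next step I would invoke classical structure results for Mori domains: when $(R:\overline{R})\neq 0$ (a theorem going back to Querre, and in the form used by Barucci), and when $R$ is seminormal Mori (the corresponding result in the Barucci--Gabelli--Roitman circle of ideas), the complete integral closure $\overline{R}$ is a Krull domain. Since $R'\subseteq\overline{R}$ and $R'$ is Prüfer, $\overline{R}$ is a Prüfer overring of $R'$, so $\overline{R}$ is simultaneously Prüfer and Krull, and hence Dedekind with $\dim\overline{R}=1$. In the seminormal case one also has $R'=\overline{R}$ (since $R'$ is already integrally closed and lies inside the CIC $\overline{R}$, and seminormality plus $\overline{R}$ Krull forces equality), whence $\dim R=\dim R'=1$ by integrality. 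In the conductor case, $c\overline{R}\subseteq R$ for some $0\neq c\in R$ exhibits $\overline{R}$ as a fractional $R$-ideal; combined with $\overline{R}$ being Noetherian (Dedekind), this forces $\overline{R}$ to be finite as an $R$-module, hence integral over $R$, so again $\overline{R}=R'$ and $\dim R=\dim R'=1$.

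The main obstacle is the middle step of showing $\overline{R}$ is Krull under each of the two hypotheses; this is a genuinely nontrivial piece of Mori-domain structure theory that I would simply cite rather than reprove. Once that is in hand, the remainder is routine: overrings of Prüfer domains are Prüfer, Prüfer intersected with Krull is Dedekind, and the equality $\overline{R}=R'$ in either case transfers the one-dimensionality of $\overline{R}$ down to $R$ via the integral extension $R\subseteq R'$.
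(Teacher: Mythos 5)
Your opening move (Lemma~\ref{RRID.4} gives $R'$ Pr\"ufer, Barucci gives $\overline{R}$ Krull when $(R:\overline{R})\neq 0$, and Pr\"ufer $+$ Krull $=$ Dedekind, so $\dim\overline{R}=1$) is exactly how the paper handles the conductor case. The gap is in how you transfer one-dimensionality down to $R$. Your claim that ``$\overline{R}$ is a fractional $R$-ideal and Noetherian, hence a finite $R$-module, hence integral over $R$, so $\overline{R}=R'$'' is a non sequitur: being a fractional ideal of $R$ together with $\overline{R}$ being Noetherian \emph{as a ring} gives no finite generation \emph{over $R$} (Mori only yields ACC on divisorial ideals, not finite generation). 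A concrete counterexample to both of your transfer steps is $R=\Q+X\R[[X]]$: it is a seminormal Mori domain with conductor $(R:\overline{R})=X\R[[X]]\neq 0$, and $\overline{R}=\R[[X]]$ is a DVR (Krull), yet $\overline{R}$ is not a finitely generated $R$-module and is not integral over $R$ (for instance $\pi\in\overline{R}\setminus R'$), so $R'\subsetneq\overline{R}$. This simultaneously refutes your conductor-case argument and your seminormal-case assertion that ``seminormality plus $\overline{R}$ Krull forces $R'=\overline{R}$.'' The paper avoids this issue by quoting Barucci--Houston \cite[Corollary 3.4]{BH} to pass from $\dim\overline{R}=1$ to $\dim R=1$; some such nontrivial result about $\Spec$ of a Mori domain with nonzero conductor is genuinely needed here.

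The seminormal case has a second problem: your key input, that the complete integral closure of a seminormal Mori domain is Krull, is not a result the paper uses, and I do not know of such a theorem in the Barucci--Gabelli--Roitman literature (the known positive statement is the conductor case; in general $\overline{R}$ of a Mori domain can fail to be Krull). The paper's actual seminormal argument is entirely different and local: if $\dim R\geq 2$, pick a maximal ideal $M$ with $\htt M\geq 2$; then $B=(MR_M)^{-1}=(MR_M:MR_M)$ has a nondivisorial maximal ideal lying over $MR_M$ by \cite[Lemma 2.5]{BH}, while on the other hand, since $R'$ is Pr\"ufer, the $t$-linkedness results of \cite{DHLRZ} and \cite{DHLZ} force every maximal ideal of $B$ to be a $t$-ideal, hence a $v$-ideal because $B$ is Mori --- a contradiction. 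So for the seminormal half you would need either to locate and verify the Krull-type result you want to cite (and then still fix the dimension transfer as above), or to switch to an argument of this local, divisoriality-based kind.
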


\begin{proof} Assume that $R$ is a Ratliff-Rush domain. By
Lemma~\ref{RRID.4}, $R'$ is a Pr\"ufer domain.\\
\1 If $(R:\overline{R})\not=(0)$, then $\overline{R}$ is a Krull
domain (\cite[Corollary 18]{Bar}). Since $R'\subseteq \overline{R}$,
then $\overline{R}$ is a Pr\"ufer domain, and therefore Dedekind.
Hence $\dim(\overline{R})=1$. By \cite[Corollary 3.4]{BH},
$\dim(R)=1$, as desired.\\
\2 Assume that $R$ is seminormal. If $\dim(R)\geq 2$, then $R$ has a
maximal ideal $M$ such that $htM\geq 2$. Set
$B=(MR_{M})^{-1}=(MR_{M}:MR_{M})$. Since $R_{M}$ is a local Mori
domain which is seminormal and $htMR_{M}=htM\geq 2$, then $B$
contains a nondivisorial maximal ideal $N$ contracting to $MR_{M}$
(\cite[Lemma 2.5]{BH}). Since $R'$ is a Pr\"ufer domain
(Lemma~\ref{RRID.4}) and combining \cite[Corollary 2.5]{DHLRZ}) and
\cite[Theorem 2.6]{DHLZ}, we get that every maximal ideal of $B$ is
a $t$-ideal and so a $v$-ideal since $B$ is Mori, which is absurd.
Hence $\dim(R)=1$, as desired.
\end{proof}

\section{Ratliff-Rush ideals in a Valuation domain}\label{RRIV}

It's well-known that the maximal ideal $M$ of a valuation domain $V$
is either principal or idempotent, any nonzero prime ideal $P$ of
$V$ is a divided prime ideal, that is, $PV_{P}=P$, and any
idempotent ideal is a prime ideal. Also we recall that a valuation
domain is a $TP$ domain, that is, for each nonzero ideal $I$ of $V$,
either $II^{-1}=V$ or $II^{-1}=Q$ is a prime ideal of $V$
(\cite[Proposition 2.1]{FHP}), and for each positive integer $n$,
$I^{n}I^{-n}=II^{-1}$ (\cite[Remark 2.13(b)]{HP}). We will often use
this facts without explicit mention. Finally $V$ is strongly
discrete if it has no nonzero idempotent prime ideal (\cite[chapter
5.3]{FHP1}).\\

\bigskip

\begin{lemma}\label{RRIV.1}
Let $V$ be a valuation domain, $I$ a nonzero ideal of $V$ and assume
that $\tilde{I}\not =V$. Then $(I:I)\subseteq
(\tilde{I}:\tilde{I})$.
\end{lemma}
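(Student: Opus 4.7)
The plan is to verify the inclusion elementwise: given $x\in(I:I)$ and $y\in\tilde{I}$, I would show $xy\in\tilde{I}$. By definition, $y\in V$ and $yI^{n}\subseteq I^{n+1}$ for some $n\geq 1$, and since $xI\subseteq I$ a direct calculation gives
\[
xyI^{n}\subseteq xI^{n+1}=(xI)\cdot I^{n}\subseteq I\cdot I^{n}=I^{n+1}.
\]
So $xy$ already lies in $(I^{n+1}:_{qf(V)}I^{n})$; the only remaining point is $xy\in V$, because then $xy\in(I^{n+1}:_{V}I^{n})\subseteq\tilde{I}$.

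To secure $xy\in V$, I would invoke the overring classification. Since $(I:I)$ is a subring of $qf(V)$ containing $V$, it is an overring of the valuation domain $V$, hence of the form $V_{Q}$ for some prime $Q$ of $V$. If $V_{Q}=V$ the inclusion is trivial, so assume $V\subsetneq V_{Q}$. Because $(I:I)\cdot I\subseteq I$, the ideal $I$ is already a $V_{Q}$-module; being proper ($I\subseteq V\subsetneq V_{Q}$), it lies in the unique maximal ideal of $V_{Q}$, so $I\subseteq QV_{Q}\cap V=Q$.

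The heart of the argument is the subclaim $\tilde{I}\subseteq Q$, and this is where the hypothesis $\tilde{I}\neq V$ is essential. Suppose some $y\in\tilde{I}$ fails to lie in $Q$; then $y$ is a unit in $V_{Q}$, and multiplying the relation $yI^{n}\subseteq I^{n+1}$ by $y^{-1}\in V_{Q}$, and using that $I^{n+1}$ is a $V_{Q}$-module because $I$ is, yields $I^{n}\subseteq y^{-1}I^{n+1}\subseteq V_{Q}\cdot I^{n+1}=I^{n+1}$, so $I^{n}=I^{n+1}$. But then $1\in(I^{n+1}:_{V}I^{n})\subseteq\tilde{I}$, forcing $\tilde{I}=V$, a contradiction.

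With $\tilde{I}\subseteq Q$ in hand, write $x=a/s$ with $a\in V$ and $s\in V\setminus Q$; since $v(s)$ lies in the convex subgroup of the value group attached to $Q$ while $v(y)$ lies above it, one has $v(ay)\geq v(y)>v(s)$ and hence $xy=ay/s\in V$. Combined with $xyI^{n}\subseteq I^{n+1}$ from the first paragraph, this gives $xy\in\tilde{I}$ and completes the proof. I expect the subclaim $\tilde{I}\subseteq Q$ to be the main obstacle: it is precisely this step that fails without the hypothesis $\tilde{I}\neq V$, as the example of an idempotent nonmaximal prime $I=P$ (where $\tilde{I}=V$ by Lemma~\ref{RRID.2} but $(I:I)=V_{P}\neq V$) illustrates.
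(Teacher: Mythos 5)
Your proof is correct, and it reaches the crucial point (showing the product $xy$ lies in $V$) by a genuinely different mechanism than the paper. The paper first disposes of the invertible case via the trace property ($II^{-1}=V$ or $II^{-1}=Q$ prime), uses $L$-stability of $V$ to get $xz\in(I^{r+1}:I^{r})$, and then argues by contradiction from $xz\notin V$: then $(xz)^{-1}\in V$ gives $x^{-1}=(xz)^{-1}z\in\tilde{I}$, whence $I^{s}=I^{s+1}$, so $I$ is an idempotent prime and $\tilde{I}=V$ by Lemma~\ref{RRID.2}, contradicting the hypothesis. You instead get $xyI^{n}\subseteq I^{n+1}$ by the elementary observation $xI\subseteq I\Rightarrow xI^{n+1}\subseteq I^{n+1}$ (no $L$-stability or TP-property needed), identify $(I:I)=V_{Q}$ via the classification of overrings of a valuation domain, prove the containment $\tilde{I}\subseteq Q$ directly from the hypothesis $\tilde{I}\neq V$ (a unit of $V_{Q}$ in $\tilde{I}$ would force $I^{n}=I^{n+1}$ and hence $1\in\tilde{I}$), and then conclude $xy\in V$ by a value-group estimate. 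Both proofs funnel the hypothesis $\tilde{I}\neq V$ through the same degeneracy $I^{n}=I^{n+1}$, but your route localizes the contradiction in the cleaner subclaim $\tilde{I}\subseteq Q$ and avoids the detour through idempotent primes; the paper's route stays inside ideal-theoretic manipulations and avoids any appeal to value groups. (Incidentally, your last step can be done without valuations: since $Q$ is divided, $y/s\in QV_{Q}=Q\subseteq V$, so $xy=a\,(y/s)\in V$.)
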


\begin{proof} Let $I$ be a nonzero ideal of $V$ and assume that
$\tilde{I}\not =V$. If $II^{-1}=V$, then $I=\tilde{I}$ by
Lemma~\ref{RRID.2} and therefore $(I:I)= (\tilde{I}:\tilde{I})$.
Assume that $II^{-1}=Q$ is a prime ideal of $V$. Since $V$ is a
valuation domain, then $V$ is $L$-stable. So $(I:I)=(I^{n}:I^{n})$
for each positive integer $n$. Let $x\in (I:I)$ and $z\in
\tilde{I}$. Then $z\in V$ and $zI^{r}\subseteq I^{r+1}$ for some
positive integer $r$. Since $(I:I)=(I^{r+1}:I^{r+1})$, then
$xzI^{r}\subseteq xI^{r+1}\subseteq I^{r+1}$. Hence $xz\in
(I^{r+1}:I^{r})$. To show that $xz\in \tilde{I}$, it suffices to
prove that $xz\in V$. Suppose that $xz\not\in V$. Then $(xz)^{-1}\in
V$. Since $z\in \tilde{I}$, then $x^{-1}=(xz)^{-1}z\in \tilde{I}$.
So $x^{-1}\in V$ and $x^{-1}I^{s}\subseteq I^{s+1}$ for some
positive integer $s$. Hence $I^{s}\subseteq xI^{s+1}\subseteq
I^{s+1}$ (since $(I:I)=(I^{s+1}:I^{s+1})$) and therefore
$I^{s}=I^{s+1}$. Hence $I^{s}=I^{2s}$ and therefore $I=P$ is an
idempotent prime ideal of $V$. By Lemma~\ref{RRID.2},
$\tilde{I}=\tilde{P}=V$, which is absurd. Hence $xz\in V$. So $xz\in
\tilde{I}$ and then $x\tilde{I}\subseteq \tilde{I}$. Hence $x\in
(\tilde{I}:\tilde{I})$ and therefore $V_{Q}=(I:I)\subseteq (\tilde{I}:\tilde{I})$.\\
\end{proof}

The next proposition describes the Ratliff-Rush closure of a nonzero
integral ideal in a valuation domain.\\

\begin{proposition}\label{RRIV.2} Let $I$ be a nonzero integral
ideal of a valuation domain $V$. Then:\\
\1 $\tilde{I}=V$ if and only if $I$ is an idempotent prime ideal of $V$.\\
\2 Assume that $\tilde{I}\subsetneq V$. Then either $\tilde{I}=I$,
or $\tilde{I}=(IQ:_{V}Q)$ for some nonzero prime ideal $Q$ of $V$.
\end{proposition}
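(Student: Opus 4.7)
For part~(1), the reverse implication is Lemma~\ref{RRID.2}(2). For the forward implication I would start from $1\in\tilde{I}$, which gives $I^{s}=I^{s+1}$ for some $s\geq 1$, hence $I^{s}=I^{s+k}$ for every $k\geq 0$, and in particular $(I^{s})^{2}=I^{2s}=I^{s}$, so $I^{s}$ is idempotent and therefore a prime $P$ by the remark at the start of Section~\ref{RRIV}. Any $a\in I\setminus P$ would give $a^{s}\in I^{s}=P$, contradicting primality of $P$, so $I\subseteq P=I^{s}\subseteq I$ forces $I=P$; then $I^{2}=P^{2}=I^{2s}=I^{s}=I$ makes $I$ an idempotent prime.

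For part~(2), I would use the $TP$-property to split cases. If $II^{-1}=V$, then $I$ is invertible, hence stable, hence Ratliff-Rush by Lemma~\ref{RRID.2}(1), and $\tilde{I}=I$ falls in the first alternative. Otherwise $Q:=II^{-1}$ is a nonzero prime of $V$. If $\tilde{I}=I$, the first alternative again holds; so assume $\tilde{I}\supsetneq I$ and fix $z\in\tilde{I}\setminus I$. The trivial containment $\tilde{I}\subseteq(IQ:_{V}Q)$ is immediate: from $zI^{n}\subseteq I^{n+1}$, multiplying by $I^{-n}$ and using the identity $I^{n}I^{-n}=II^{-1}=Q$ yields $zQ\subseteq IQ$.

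The reverse containment is the heart of the proof and rests on extracting rigid structure from $z$. Because $V$ is a valuation domain and $z\notin I$, $I\subsetneq zV$, so $I_{1}:=z^{-1}I\subsetneq V$. The substitution $I=zI_{1}$ transforms $zI^{n}\subseteq I^{n+1}$ into $I_{1}^{n}\subseteq I_{1}^{n+1}$, and combined with the automatic reverse inclusion yields $I_{1}^{n}=I_{1}^{n+1}$; hence $\widetilde{I_{1}}=V$, and applying part~(1) to $I_{1}$ forces $I_{1}=P$ to be an idempotent prime, so $I=zP$. Standard computations in valuation domains then give $(P:P)=V_{P}$ and $PP^{-1}=P$, hence $Q=II^{-1}=zP\cdot z^{-1}P^{-1}=PP^{-1}=P$. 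A direct calculation using $P^{n}=P$ shows that
\[(I^{n+1}:_{V}I^{n})=\{x\in V:xP\subseteq zP\}=\{x\in V:x/z\in V_{P}\}=zV_{P}\cap V\]
independently of $n$, and the same formula gives $(IQ:_{V}Q)=(zP:_{V}P)=zV_{P}\cap V$. Thus $\tilde{I}=(IQ:_{V}Q)$, as claimed. The principal obstacle is exactly this structural step: one must upgrade the mild hypothesis $z\in\tilde{I}\setminus I$ to the strong conclusion that $I$ itself has the rigid form $zP$ with $P$ an idempotent prime, and the substitution $I=zI_{1}$ together with part~(1) is what makes this possible.
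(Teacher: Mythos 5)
Your proposal is correct; part (1) is essentially the paper's own argument, and the easy inclusion $\tilde{I}\subseteq(IQ:_{V}Q)$ via $I^{n}I^{-n}=II^{-1}=Q$ is also the paper's. But for the substantive half of part (2) you take a genuinely different route. The paper never identifies the structure of $I$: it invokes Lemma~\ref{RRIV.1} to see that $\tilde{I}$ is an ideal of $V_{Q}=(I:I)$, picks $x\in(IQ:_{V}Q)\setminus\tilde{I}$, and reaches the contradiction $\tilde{I}\subseteq xQ\subseteq IQ\subseteq I\subsetneq\tilde{I}$. You instead bootstrap part (1): from one witness $z\in\tilde{I}\setminus I$ you rescale, apply (1) to the proper ideal $z^{-1}I$, and conclude $I=zP$ with $P$ an idempotent prime; then $Q=II^{-1}=PP^{-1}=P$ and both $\tilde{I}=\bigcup_{n}(I^{n+1}:_{V}I^{n})$ and $(IQ:_{V}Q)$ are computed exactly as $zV_{P}\cap V$, using $(P:P)=V_{P}$ and $P^{-1}=V_{P}$ for an idempotent prime (one-line facts, though you only label them ``standard''). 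All the individual steps check out ($z\notin I$ gives $I\subsetneq zV$ in a valuation domain; $zI^{n}\subseteq I^{n+1}$ does become $I_{1}^{n}=I_{1}^{n+1}$ after the substitution $I=zI_{1}$). What your route buys is independence from Lemma~\ref{RRIV.1} plus strictly more information: whenever $I\subsetneq\tilde{I}\subsetneq V$, the ideal must have the rigid form $zP$ and $\tilde{I}=zV_{P}\cap V$, an explicit description the paper only extracts piecemeal in later arguments (e.g.\ in the proofs of Proposition~\ref{RRIV.3} and Theorem~\ref{RRIV.5}); it even makes your ``trivial containment'' paragraph redundant, since both sides are computed outright. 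What the paper's route buys is brevity on this step and reuse of Lemma~\ref{RRIV.1}, which it needs again afterwards.
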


\begin{proof} \1 If $I$ is an idempotent prime ideal of $V$, by
 Lemma~\ref{RRID.2}, $\tilde{I}=V$. Conversely, assume that
 $\tilde{I}=V$. Then there exists a positive integer $n$ such that
 $I^{n}\subseteq I^{n+1}$. Hence $I^{n}=I^{n+1}$. By induction,
 $(I^{n})^{2}=I^{n}$. So $I^{n}$ is an idempotent ideal of $V$. Hence
 $I^{n}=P$ is a prime ideal of $V$. Then $I\subseteq P\subseteq I$ and therefore $I=P$, as
 desired.\\

\2 Assume that $\tilde{I}\subsetneq V$. If $II^{-1}=V$, then
$I=\tilde{I}$ by Lemma~\ref{RRID.2}. Assume that
$II^{-1}=Q\subsetneq V$ is a prime ideal. Then $(I:I)=V_{Q}$ and for
each positive integer $n$, $I^{n}I^{-n}=Q$ since $V$ is a $TP$
domain. Let $x\in \tilde{I}$. Then $x\in V$ and $xI^{n}\subseteq
I^{n+1}$ for some positive integer $n$. So $xQ=xI^{n}I^{-n}\subseteq
xI^{n+1}I^{-n}=IQ$. Hence $x\in (IQ:_{V}Q)$ and therefore
$\tilde{I}\subseteq (IQ:_{V}Q)$. Now, assume that $I\subsetneq \tilde{I}\subsetneq V$.\\
To complete the proof, we will show that $\tilde{I}=(IQ:_{V}Q)$.
Since $V_{Q}=(I:I)\subseteq (\tilde{I}:\tilde{I})$
(Lemma~\ref{RRIV.1}), then $\tilde{I}$ is an ideal of $V_{Q}$.
Suppose that $\tilde{I}\subsetneq (IQ:_{V}Q)$. Let $x\in
(IQ:_{V}Q)\setminus \tilde{I}$. Since $V$ is a valuation domain,
then $\tilde{I}\subsetneq xV$. So $x^{-1}\tilde{I}\subsetneq
V\subseteq V_{Q}$. Hence $x^{-1}\tilde{I}$ is a proper ideal of
$V_{Q}$. So $x^{-1}\tilde{I}\subseteq Q$ ($Q=QV_{Q}$ is the maximal
ideal of $V_{Q}$). Hence $\tilde{I}\subseteq xQ\subseteq IQ\subseteq
I\subsetneq \tilde{I}$, a contradiction. It follows that
$\tilde{I}=(IQ:_{V}Q)$, as desired.
\end{proof}

Our next proposition shows that the Ratliff-Rush closure of an ideal
$I$ in a valuation domain is itself a Ratliff-Rush ideal, and gives
necessary and sufficient condition for preserving the Ratliff-Rush
closure under inclusion.\\

\begin{proposition}\label{RRIV.3} Let $I$ be a nonzero ideal of a
valuation domain $V$. Then\\
1) $\tilde{\tilde{I}}=\tilde{I}$.\\
2) $\tilde{I}\subseteq \tilde{J}$ for every ideals $I\subseteq J$ if
and only each nonzero nonmaximal prime ideal of $V$ in not
idempotent.
\end{proposition}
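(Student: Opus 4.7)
For part (1), my plan is to split by Proposition~\ref{RRIV.2} on the shape of $\tilde{I}$. The cases $\tilde{I}=V$ (for which $\tilde{V}=V$ by direct computation) and $\tilde{I}=I$ are trivial, so the real work lies in the case $\tilde{I}=(IQ:_{V}Q)\supsetneq I$ with $Q=II^{-1}$. I would first argue that $Q$ must be idempotent: otherwise $Q=qV_{Q}$ is principal in $V_{Q}$, and since $I=IV_{Q}$, a direct check gives $(IQ:_{V}Q)=\{x\in V:xV_{Q}\subseteq I\}=I$, contradicting $\tilde{I}\supsetneq I$. Writing $J=\tilde{I}$, the sandwich $IQ\subseteq JQ\subseteq IQ$ gives $JQ=IQ$, and iterating yields $J^{n}Q=I^{n}Q$ for all $n\geq 1$. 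For $y\in\tilde{J}$ with $yJ^{n}\subseteq J^{n+1}$, I would multiply by $Q$ to obtain $yI^{n}Q\subseteq I^{n+1}Q$, then by the fractional ideal $I^{-n}$, using the $TP$ identity $I^{n}I^{-n}=II^{-1}=Q$, to deduce $yQ^{2}\subseteq IQ^{2}$. Since $Q^{2}=Q$, this forces $yQ\subseteq IQ$, i.e., $y\in(IQ:_{V}Q)=J$.

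For the $(\Rightarrow)$ direction of part (2), I would argue by contraposition. Given a nonzero nonmaximal idempotent prime $P$, pick any nonzero $a\in M\setminus P$; such $a$ is a non-unit, and $P\subseteq aV$ because $v(a)$ lies in the convex subgroup of the value group attached to $P$ while $v(p)$ lies strictly above it for every $p\in P$, so $v(p)>v(a)$. Then $I:=P\subseteq aV=:J$ gives $\tilde{I}=V$ by Proposition~\ref{RRIV.2}(1), whereas $\tilde{J}=aV\subsetneq V$ by Lemma~\ref{RRID.2}, so $\tilde{I}\not\subseteq\tilde{J}$.

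For the $(\Leftarrow)$ direction, assume no nonzero nonmaximal prime of $V$ is idempotent, and let $x\in\tilde{I}$ with $I\subseteq J$. The cases $\tilde{I}=I$ (immediate) and $\tilde{I}=V$ (which by Proposition~\ref{RRIV.2}(1) forces $I=M$ with $M$ idempotent, so $J\in\{M,V\}$ and $\tilde{J}=V$) are handled at once. In the main case $\tilde{I}=(IQ:_{V}Q)\supsetneq I$, the argument of part (1) forces $Q$ to be idempotent, hence $Q=M$ by hypothesis, and the identity $M^{2}=M$ rewrites $\tilde{I}$ as $(I:_{V}M)$. Then $xM\subseteq I\subseteq J$, so $x\in(J:_{V}M)$, and I would show $(J:_{V}M)\subseteq\tilde{J}$ by subcasing on $J$: if $J$ is invertible then $(J:_{V}M)=J=\tilde{J}$; if $JJ^{-1}=M$ then the same $M^{2}=M$ rewriting gives $\tilde{J}=(J:_{V}M)$; and if $JJ^{-1}=Q_{J}$ with $Q_{J}\ne M$, then $Q_{J}$ is nonmaximal, hence non-idempotent by hypothesis, so by the part (1) computation $\tilde{J}=J$. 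In this last subcase $J$ is a $V_{Q_{J}}$-module and $Q_{J}\subsetneq M$, so picking $m\in M\setminus Q_{J}$ gives $xm\in J$ and $1/m\in V_{Q_{J}}$, hence $x=(xm)(1/m)\in J\cdot V_{Q_{J}}=J$. The main obstacle I anticipate is precisely this last subcase: identifying the associated prime $Q_{J}$ of $J$ via the hypothesis and exploiting the $V_{Q_{J}}$-module structure of $J$ to conclude $(J:_{V}M)=J$.
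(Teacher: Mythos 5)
Your overall strategy is sound and in places genuinely different from the paper's. For part (1) the paper goes through Lemma~\ref{RRIV.1} to get $(I:I)\subseteq(\tilde{I}:\tilde{I})$, sets $P=JJ^{-1}$ and shows $P=Q$; you instead observe that $\tilde{I}\supsetneq I$ already forces $Q=II^{-1}$ to be idempotent (if $Q=qV_{Q}$ then $(IQ:_{V}Q)=I$ because $I$ is a $V_{Q}$-module), and then finish with $J^{n}Q=I^{n}Q$, the identity $I^{n}I^{-n}=II^{-1}$ and $Q^{2}=Q$. That computation is correct and is a clean shortcut avoiding Lemma~\ref{RRIV.1}. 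Your $(\Rightarrow)$ in part (2) is the paper's argument, and your $(\Leftarrow)$ is organized differently (reduce to $(J:_{V}M)\subseteq\tilde{J}$ and split on $JJ^{-1}$, rather than the paper's contradiction starting from $x\in\tilde{I}\setminus\tilde{J}$ and deriving $I=xQ$); your subcase $JJ^{-1}=Q_{J}\subsetneq M$ is handled correctly and neatly via $m\in M\setminus Q_{J}$.

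The gap is in the subcase $JJ^{-1}=M$: the assertion that ``the same $M^{2}=M$ rewriting gives $\tilde{J}=(J:_{V}M)$'' does not follow from anything you have established. Proposition~\ref{RRIV.2} gives $\tilde{J}=(JM:_{V}M)$ only under the extra hypothesis $J\subsetneq\tilde{J}\subsetneq V$; in general its proof yields only $\tilde{J}\subseteq(JM:_{V}M)=(J:_{V}M)$, which is the wrong direction — a priori $J$ could be Ratliff-Rush closed while $(J:_{V}M)\supsetneq J$, and then your chain $x\in(J:_{V}M)\subseteq\tilde{J}$ breaks. The claim is in fact true, but it needs the following short argument (which is essentially the endgame of the paper's own proof): if $x\in(J:_{V}M)\setminus J$, then $J\subsetneq xV$, so every $j\in J$ is $xu$ with $u$ a nonunit, giving $J\subseteq xM\subseteq J$, hence $J=xM$; then, since $M^{2}=M$ and $(M:M)=V$, one computes $\tilde{J}=\widetilde{xM}=x(M:M)\cap V=xV\ni x$. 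With that inserted (and a one-line remark that in your main case $M=Q$ is idempotent, so $(V:M)=(M:M)=V$, which is what makes $(J:_{V}M)=J$ in the invertible subcase), your proof of part (2) is complete.
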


\begin{proof} 1) If $I=\tilde{I}$ or $\tilde{I}=V$, then clearly
$\tilde{\tilde{I}}=\tilde{I}$. Assume that $I\subsetneq
\tilde{I}\subsetneq V$. By Proposition~\ref{RRIV.2},
$\tilde{I}=(IQ:_{V}Q)$ where $Q=II^{-1}$ is a prime ideal of $V$
(note that $II^{-1}\subsetneq V$, otherwise $I=\tilde{I}$, by
Lemma~\ref{RRID.2}). For simplicity, we set $J=\tilde{I}$. Our aim
is to prove that $J=\tilde{J}$. If $JJ^{-1}=V$, then $J=\tilde{J}$
by Lemma~\ref{RRID.2}. Assume that $JJ^{-1}\subsetneq V$. By
Lemma~\ref{RRIV.1}, $V_{Q}=(I:I)\subseteq
(\tilde{I}:\tilde{I})=(J:J)= V_{P}$, where $P=JJ^{-1}$. So
$P\subseteq Q$. Let $x\in \tilde{J}$. Then $x\in V$ and
$xJ^{n}\subseteq J^{n+1}$ for some positive integer $n$. Composing
the two sides with $J^{-n}$ and using the fact that
$P=JJ^{-1}=J^{n}J^{-n}$, we obtain $xP\subseteq JP$. Hence
$\tilde{J}P\subseteq JP\subseteq JQ=\tilde{I}Q=IQ$. Now, if
$P\subsetneq Q$, then let $a\in Q\setminus P$. Since $V$ is a
valuation domain, then $P\subsetneq aV$. So $a^{-1}P\subsetneq V$.
Hence $a^{-1}\in (V:P)=(P:P)=V_{P}=(J:J)$ (\cite{HuP}). So
$a^{-1}J\subseteq J$. Then $J\subseteq aJ\subseteq QJ=QI\subseteq
I\subsetneq J$, a contradiction. Hence $P=Q$. So
$\tilde{J}P=\tilde{J}Q=JQ=IQ$. Hence
$\tilde{J}\subseteq (IQ:_{V}:Q)=\tilde{I}=J$, as desired.\\

2) Assume that $\tilde{I}\subseteq \tilde{J}$ for every ideals
$I\subseteq J$. Suppose that there is a nonzero nonmaximal prime
ideal $P$ of $V$ such that $P^{2}=P$. Let $a\in M\setminus P$, where
$M$ is the maximal ideal of $V$. Since $V$ is a valuation domain,
then $P\subsetneq aV=I$. By Lemma~\ref{RRID.2} and the hypothesis,
$V=\tilde{P}\subseteq \tilde{I}=aV\subseteq M$, which is absurd.\\

Conversely, assume that each nonzero nonmaximal prime ideal of $V$
in not idempotent and let $I\subseteq J$ be ideals of $V$. If
$I=\tilde{I}$, or $\tilde{J}=V$, then clearly $\tilde{I}\subseteq
\tilde{J}$. If $\tilde{I}=V$, by Proposition~\ref{RRIV.2}, $I=P$ is
an idempotent prime ideal of $V$. By the hypothesis, $I=M$. So
$M=I\subseteq J\subseteq M$. Then $I=J=M$ and so
$\tilde{I}=\tilde{J}$. Hence we may assume that $I\subsetneq
\tilde{I}\subsetneq V$ and $\tilde{J}\subsetneq V$. By
Proposition~\ref{RRIV.2}, $\tilde{I}=(IQ:_{V}Q)$, where $Q=II^{-1}$.
Now, suppose that $\tilde{I}\not\subseteq \tilde{J}$. Then let $x\in
\tilde{I}\setminus \tilde{J}$. Since $V$ is a valuation domain, then
$\tilde{J}\subsetneq xV$. So $x^{-1}I\subseteq x^{-1}J\subseteq
x^{-1}\tilde{J}\subsetneq V\subseteq V_{Q}$. Since $I$ is an ideal
of $(I:I)=V_{Q}$, then $x^{-1}I\subseteq Q$. So $I\subseteq
xQ\subseteq \tilde{I}Q=IQ\subseteq I$. Therefore $I=xQ$. If $Q$ is
nonmaximal, by the hypothesis, $Q^{2}\subsetneq Q$. Hence $Q=aV_{Q}$
for some nonzero $a\in Q$ (since $Q$ is the maximal ideal of
$V_{Q}$). Hence $I=xQ=xaV_{Q}=xa(I:I)$. So $I$ is stable and by
Lemma~\ref{RRID.2}, $\tilde{I}=I$, which is absurd. Hence $Q=M$ and
$I=xM$. If $M$ is principal in $V$, then so is $I$ and therefore
$\tilde{I}=I$, which is absurd. Hence $M=M^{2}$. So
$\tilde{I}=(IM:_{V}M)=(xM^{2}:_{V}M)=(xM:_{V}M)=x(M:M)=xV$. Let
$b\in J\setminus I$. Then $xM=I\subsetneq bV$. Hence
$xb^{-1}M\subseteq M$. So $xb^{-1}\in (M:M)=V$. Hence
$x=(xb^{-1})b\in J\subseteq \tilde{J}$, which is absurd. It follows
that $\tilde{I}\subseteq \tilde{J}$, as desired.
\end{proof}

\bigskip

Now, we extend the Ratliff-Rush closure to arbitrary nonzero
fractional ideals and we study its link to the notion of star
operations. Our motivation is \cite[Example 1.11]{HLS}, which
provided an example of a Noetherian domain $R$ with a nonzero ideal
$I$ such that $\widetilde{aI}\not=a\tilde{I}$ for some $0\not=a\in
R$. First, we recall that a star operation on $R$ is a map $*:
F(R)\longrightarrow F(R), E\mapsto E^{*}$, where $F(R)$ denotes the
set of all nonzero fractional ideals of $R$, with the following
properties for each $E, F\in F(R)$ and each $0\not
=a\in K$:\\
$(E_{1})$ $R^{*}=R$ and $(aE)^{*}=aE^{*}$;\\
$(E_{2})$ $E\subseteq E^{*}$ and if $E\subseteq F$, then
$E^{*}\subseteq F^{*}$;\\
$(E_{3})$ $E^{**}=E^{*}$.\\
For more details on the notion of star operations, we refer the reader to \cite{G1}. \\

\begin{definition}\label{RRIV.4} Let $R$ be an integral domain with
quotient field $K$ and let $I$ be a nonzero fractional ideal of
$R$.\\
$(1)$ The generalized Ratliff-Rush closure of $I$ is defined by
$\hat{I}:=\{x\in K| xI^{n}\subseteq I^{n+1}$, for some $n\geq 1\}$.
Clearly $\tilde{I}=\hat{I}\cap R$ for any nonzero integral ideal $I$
of $R$.
\end{definition}

It is easy to see that for a nonzero fractional ideal $I$ of a
domain $R$, $\hat{I}$ is an $R$-module which is a fractional ideal
if $(R:R^{I})\not=0$. In particular if $R$ is conducive or
$L$-stable, then $\hat{I}$ is always a fractional ideal of $R$. The
next theorem gives necessary and sufficient conditions for the
generalized Ratliff-Rush closure to be a star operation on a
valuation domain.

\begin{thm}\label{RRIV.5} Let $V$ be a valuation domain. The
generalized Ratllif-Rush closure on $V$ is a star operation if and
only if each nonzero nonmaximal prime ideal $P$ of $V$ is not
idempotent. In this case, it coincides with the $v$-operation.
\end{thm}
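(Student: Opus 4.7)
The plan is to prove the two implications separately; for the harder backward direction I aim at the stronger statement that $\hat{I}=I_v$ for every nonzero fractional ideal $I$ of $V$, which yields at once both that $\hat{\,}$ is a star operation and that it coincides with the $v$-operation.

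For the forward direction I argue contrapositively. Suppose $V$ admits a nonzero nonmaximal idempotent prime $P$, and let $M$ denote the maximal ideal. Choose $a\in M\setminus P$, so that $P\subsetneq aV$. Since $P=P^{n}$ for every $n\geq 1$, the defining condition $xP^{n}\subseteq P^{n+1}$ collapses to $xP\subseteq P$, giving $\hat{P}=(P:P)=V_{P}$; a direct substitution yields $\widehat{aV}=aV$. If $\hat{\,}$ satisfied the monotonicity axiom $(E_{2})$, then $V_{P}\subseteq aV$, yet $a^{-1}\in V_{P}$ (since $a\notin P$) while $a^{-1}\notin aV$ (else $a^{2}$ would be a unit of $V$, contradicting $a\in M$), a contradiction.

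For the backward direction, assume each nonzero nonmaximal prime of $V$ is not idempotent. Axiom $(E_{1})$ is immediate: $\hat{V}=V$ and $\widehat{aE}=a\hat{E}$ for $a\in K^{\ast}$ both follow by rewriting the defining condition. Using this scaling invariance, the equality $\hat{I}=I_v$ reduces to the case of a nonzero integral ideal $I$. For such $I$ I split into the cases of Proposition~\ref{RRIV.2}: if $II^{-1}=V$ then $I$ is principal and $\hat{I}=I=I_v$; if $II^{-1}=Q\subsetneq V$ is prime, then multiplying $xI^{n}\subseteq I^{n+1}$ by $I^{-n}$ and invoking the identity $I^{n}I^{-n}=Q$ yields $\hat{I}\subseteq(IQ:_{K}Q)$. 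The main obstacle is then to close the circle $\hat{I}=(IQ:_{K}Q)=I_v$: for any hypothetical $x$ in the colon ideal escaping either $I_v$ or $\hat{I}$, the valuation-domain structure forces $I_v\subsetneq xV$, whence $x^{-1}I\subsetneq V_{Q}$, and the non-idempotency of $Q$ (when $Q$ is nonmaximal), together with an explicit computation in the residual subcase $Q=M=M^{2}$, produces a contradiction in direct analogy with the proof of Proposition~\ref{RRIV.3}(2). The extreme case $\hat{I}=V$ is disposed of via Proposition~\ref{RRIV.2}, which forces $I=M$ with $M^{2}=M$, and one verifies $M_v=V=\hat{M}$ directly.
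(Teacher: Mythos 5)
Your forward direction is correct and is in substance the paper's own argument: the paper quotes Proposition~\ref{RRIV.3}(2), whose proof is exactly your counterexample (an idempotent nonmaximal prime $P$ with $a\in M\setminus P$ violates monotonicity, since $\hat{P}\supseteq V\not\subseteq aV=\widehat{aV}$). Your overall plan for the converse --- prove $\hat{I}=I_{v}$ for every nonzero fractional ideal outright and then get the star-operation property for free because $v$ is a star operation --- is a legitimate reorganization; the paper instead first proves a Claim ($\hat{I}=\tilde{I}$ on integral ideals, using non-idempotency of nonmaximal primes), verifies $(E_{1})$--$(E_{3})$ via Proposition~\ref{RRIV.3}, and only then identifies $\hat{\ }$ with $v$ using that $v$ is the largest star operation.

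The gap is in how you propose to ``close the circle'' $\hat{I}=(IQ:Q)=I_{v}$ when $II^{-1}=Q$ is prime. Your contradiction scheme takes a hypothetical $x$ \emph{in the colon ideal} escaping $I_{v}$ or $\hat{I}$; at best this shows $(IQ:Q)\subseteq I_{v}$ and $(IQ:Q)\subseteq\hat{I}$, which together with $\hat{I}\subseteq(IQ:Q)$ gives only $\hat{I}=(IQ:Q)\subseteq I_{v}$. The reverse inclusion $I_{v}\subseteq\hat{I}$ (equivalently $I_{v}\subseteq(IQ:Q)$) is nowhere argued, and it is not automatic: it is precisely the point where one must invoke the structure of non-divisorial ideals of a valuation domain, namely that $I\neq I_{v}$ forces $I=aM$ with $M=M^{2}$, whence $\hat{I}=a\hat{M}=a(M:M)=aV=aM_{v}=I_{v}$. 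Your ``extreme case $\hat{I}=V$'' sentence only covers $I=M$ itself, not $I=aM$ with $a$ a nonunit, so without this ingredient your argument yields $\hat{I}\subseteq I_{v}$ but neither the equality with $v$ nor (on your chosen route) the star-operation axioms $(E_{2})$, $(E_{3})$. A secondary caution: the appeal to ``direct analogy with Proposition~\ref{RRIV.3}(2)'' cannot be taken verbatim, because that proof derives its contradiction from the standing assumption $I\subsetneq\tilde{I}$, which you have not made; in your setting the contradiction must instead come from showing the colon ideal collapses to $I$ (e.g.\ when $Q$ is nonmaximal and non-idempotent, $Q=aV_{Q}$ gives $(IQ:Q)=(I:V_{Q})=I$), a step you should write out. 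Both defects are repairable, either by adding the non-divisorial-ideal fact or by reverting to the paper's order of proof.
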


\begin{proof} Assume that the generalized Ratliff-Rush closure is a
star operation. Then, by Proposition~\ref{RRIV.3}, each nonzero
nonmaximal prime ideal of $V$ is not idempotent. Conversely, assume
that each nonzero nonmaximal prime ideal of $V$ is not idempotent.\\
{\bf Claim}. For each integral ideal $I$ of $V$,
$\tilde{I}=\hat{I}$. Indeed, it suffices to show that
$\hat{I}\subseteq V$.  If $II^{-1}=V$, then $\hat{I}=I$, as desired.
Assume that $II^{-1}=Q$ is a prime ideal of $V$. Then $(I:I)=V_{Q}$.
Let $x\in \hat{I}$. Then $xI^{n}\subseteq I^{n+1}$ for some positive
integer $n$. Since $I^{n}I^{-n}=Q$, we get $xQ\subseteq IQ$. Now, if
$Q=M$, then $xM\subseteq IM\subseteq M$. So $x\in (M:M)=V$. If
$Q\subsetneq M$, by hypothesis, $Q$ is not idempotent. Hence
$Q=aV_{Q}$ (since $Q$ is the maximal ideal of $V_{Q}$). So
$xaV_{Q}\subseteq aIV_{Q}=aI$ (here $I$ is an ideal of
$(I:I)=V_{Q}$). Hence $xV_{Q}\subseteq I$ and therefore $x\in
I\subseteq V$, as desired.\\
Now, we prove the three properties of star operations.
Let $I$ and $J$ be nonzero fractional ideals of $V$ and $o\not=a\in qf(V)$.\\
\1 $(E_{1})$: $x\in \widehat{aI}$ if and only if $x(aI)^{n}\subseteq
(aI)^{n+1}$ for some positive integer $n$, if and only if
$xa^{-1}\in (I^{n+1}:I^{n})\subseteq \hat{I}$, if and only if $x\in
a\hat{I}$.\\
\2 $(E_{2})$: Let $o\not =d\in V$ such that $dI\subseteq dJ\subseteq
V$. By $(E_{1})$, Proposition~\ref{RRIV.3}(2) and the claim,
$d\hat{I}=\widehat{dI}=\widetilde{dI}\subseteq
\widetilde{dJ}=\widehat{dJ}=d\hat{J}$. Hence $\hat{I}\subseteq \hat{J}$.\\
\3 $(E_{3})$: Clearly $I\subseteq \hat{I}$ and by $(E_{1})$ and
Proposition~\ref{RRIV.3}(1), $\hat{\hat{I}}=\hat{I}$.\\
To complete the proof, we prove that $\tilde{I}=I_{v}$ for each
nonzero fractional ideal $I$ of $V$. Since the $v$-operation is the
largest star operation on $V$, then $\hat{I}\subseteq I_{v}$.
Suppose that $\hat{I}\subsetneq I_{v}$ for some ideal $I$ of $V$.
Then $I$ is not divisorial in $V$. Hence $I=aM$ for some $a\in
qf(V)$ and $M=M^{2}$. Since $M$ is idempotent, then $M$ is not
divisorial. So $M_{v}=V$. Hence $I_{v}=aM_{v}=aV=\hat{I}$ (note that
by $(E_{1})$ and Lemma~\ref{RRID.2}
$\hat{I}=a\hat{M}=a\tilde{M}=aV$), which is absurd.
\end{proof}




\end{document}